\title{Complementary Schur Asymptotics for Partitions}
\author{Jaroslav Han\v cl Jr.}
\newtheorem{theorem}{Theorem}
\newtheorem{lemma}{Lemma}
\newtheorem{hypothesis}{Conjecture}
\begin{document}

\maketitle

\begin{abstract}
We deduce from the strong form of the Hardy--Ramanujan asymptotics for the partition function $p(n)$ an asymptotics for $p_{-S}(n)$, the number of partitions of $n$ that do not use parts from a finite set $S$ of positive integers. We apply this to construct highly oscillating partition ideals.

\end{abstract}

\section{Introduction}
Let $n \in \mathbb{N}$. We say that $\lambda = (\lambda_1, \dots, \lambda_k)$, where $\lambda_1 \geq \dots \geq \lambda_k > 0$ are positive integers, is a \emph{partition} of $n$ if $\sum_{i=1}^k \lambda_i = n$. We say that $k$ is the \emph{length} of $\lambda$ and $\lambda_1, \dots, \lambda_k$ are the parts of the partition $\lambda$.

Let $p(n)$ be the number of partitions of $n$. The famous result of Hardy and Ramanujan \cite{HR18} gives the asymptotics of $p(n)$ as
\begin{align} \label{eq:p(n)}
    p(n) \sim \frac{e^{C\sqrt{n}}}{4n\sqrt{3}}\;\mbox{ where $C = \pi \sqrt{2/3} \approx 2.565$}\;.
\end{align}
As usual, $f(n)\sim g(n)$ means 
that $\lim_{n\to\infty}\frac{f(n)}{g(n)}=1$. For more information on partitions refer 
to the book \cite{andr} of G. Andrews. In \cite{joha} F. Johansson presents an 
almost optimum numerical algorithm for evaluating $p(n)$.  

One can be also interested in the number $p_S(n)$ of partitions of $n$ with parts from a given (finite or infinite) set $S \subset \mathbb{N}$. Such partitions are often called ``restricted partitions''. I. Schur \cite{Schur26} proved that for finite $S$ with cardinality $|S|=t$ (and such that the $\mathrm{gcd}(S)=1$) one has the asymptotics
$$
    p_S(n) \sim \frac{n^{t-1}}{(t-1)! \prod_{s \in S}s} = \frac{1}{n(t-1)!} \prod_{s \in S} \frac{n}{s}.
$$
Note that adding a new element $s$ to $S$ increases $p_S(n)$ by the factor $\frac{n}{s|S|}$.

Motivated by these two partition asymptotics, we obtain what we call a complementary Schur asymptotic. Nicolas and Sárközy \cite{NiSa00} found an asymptotics of the number of partitions using only parts $\geq m$ for a wide range of parameter $m$. We extend this result. For a finite set of integers $S \subset \mathbb{N}$, $|S|=t$, let $p_{-S}(n)$ be the number of partitions of $n$ not using any part from $S$. Our main result is that 
$$
    p_{-S}(n) \sim p(n) \left( \frac{C}{2 \sqrt{n}} \right)^t\prod_{s\in S}s = p(n) \prod_{s \in S} \frac{Cs}{2\sqrt{n}}\;.
$$
Now each element $s$ in $S$ decreases $p(n)$ by the factor $\frac{Cs}{2\sqrt{n}}$. We apply this to construct highly oscillating partition ideals. This theorem is an extension of result \cite{NiSa00} of Nicolas and Sárközy who considered the case $S = [m]$. 

The paper is organized as follows. In Section 2 we state the strong Hardy--Ramanujan asymptotics of $p(n)$ and restate our main result. Section 3 presents some useful lemmas. Section 4 is devoted to the proof of the main theorem. In the last Section 5 we  present an application of the main result to oscillations of growth functions of partition ideals.

\section{Two asymptotics for partitions}
Let $n\in\mathbb{N}$, 
$$
    \lambda_n := \sqrt{n-\frac{1}{24}}, \quad C := \pi \sqrt{\frac23} \quad \text{and} \quad D\in(C/2,C)
$$
be a constant.

As we mentioned earlier, the classical asymptotics for $p(n)$ proved by Hardy and Ramanujan \cite{HR18} is $p(n)\sim e^{C\sqrt{n}}/4n\sqrt{3}$. In \cite[formula (1.55)]{HR18} they gave much stronger asymptotics which we give in the next theorem. They state it as a value of a derivative which we compute explicitly. 

\begin{theorem}[Hardy--Ramanujan] \label{eq:BestAsympt}
For $n=1,2,\dots$, the partition function $p(n)$ satisfies
\begin{align*}
    p(n) = \frac{e^{C\lambda_n}}{4 \pi \sqrt{2} \lambda_n^2} \left( C - \frac{1}{\lambda_n} \right) + O \left( e^{D\sqrt{n}} \right).
\end{align*}
\end{theorem}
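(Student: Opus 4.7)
The plan is to start from formula (1.55) of \cite{HR18}, which expresses $p(n)$ as a finite sum whose $q$-th term is a constant multiple of $A_q(n)\sqrt{q}\,\frac{d}{dn}\!\bigl(e^{C\lambda_n/q}/\lambda_n\bigr)$, where the $A_q(n)$ are the usual Kloosterman-type sums with $|A_q(n)|\le q$. The $q=1$ summand dominates, and the combined contribution of the terms with $q\ge 2$ is of exponential order $e^{C\lambda_n/2} = e^{C\sqrt{n}/2 + o(\sqrt{n})}$. Since $D>C/2$ by hypothesis, this tail is absorbed into $O(e^{D\sqrt{n}})$; and since $D<C$, the main term is not swallowed.

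It then remains to compute the $q=1$ contribution $\frac{1}{2\pi\sqrt{2}}\,\frac{d}{dn}\!\left(\frac{e^{C\lambda_n}}{\lambda_n}\right)$ explicitly. Writing $\lambda=\lambda_n=\sqrt{n-1/24}$ and noting $d\lambda/dn = 1/(2\lambda)$, the chain rule gives
\begin{align*}
\frac{d}{dn}\!\left(\frac{e^{C\lambda}}{\lambda}\right)
= \frac{1}{2\lambda}\cdot\frac{e^{C\lambda}(C\lambda-1)}{\lambda^{2}}
= \frac{e^{C\lambda}}{2\lambda^{2}}\!\left(C-\frac{1}{\lambda}\right).
\end{align*}
Dividing by $2\pi\sqrt{2}$ reproduces the stated main term.

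The step I expect to require most care is justifying the error bound uniformly for all $n$: one must check that the tail of the Hardy--Ramanujan series, once the factors $\sqrt{q}A_q(n)$ and the differentiated exponentials are all taken into account, is genuinely dominated by $e^{C\sqrt{n}/2}$ times a polynomial in $n$, which is then absorbed into $O(e^{D\sqrt{n}})$ for any fixed $D>C/2$. This is immediate from the standard estimates in \cite{HR18}, but it is the only place where the restriction $D\in(C/2,C)$ actually enters the argument; otherwise the statement is a pure chain-rule computation.
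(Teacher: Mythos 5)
Your proposal is correct and follows essentially the same route the paper takes: the paper simply cites formula (1.55) of Hardy--Ramanujan and notes that the stated main term is obtained by explicitly computing the derivative $\frac{d}{dn}\bigl(e^{C\lambda_n}/\lambda_n\bigr)$, which you do correctly, and the tail over $q\ge 2$ is absorbed into $O(e^{D\sqrt{n}})$ because $D>C/2$. The only thing you add beyond what the paper spells out is the explicit bookkeeping on $|A_q(n)|\le q$ and the polynomial factors, which is harmless and in the spirit of the paper's remark.
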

Note that now the error is only about a square root of the main term. This resembles strong asymptotic relations for coefficients of power series with unique dominant singularity (see P. Flajolet and R. Sedgewick \cite[Chapter V]{flaj_sedg} or think of the Fibonacci numbers). We use this result with small error term to deduce our asymptotic relation for $p_{-S}(n)$. Our main theorem says:

\begin{theorem} \label{thm:main}
Let $S=\{s_1,s_2,\dots,s_t\} \subset \mathbb{N}$ be a finite set of integers with $|S| = t$. Then the number $p_{-S}(n)$ of partitions of $n$
with parts in $\mathbb{N}\backslash S$ satisfies
$$
    p_{-S}(n) \sim p(n) \left( \frac{C}{2 \sqrt{n}} \right)^t \prod_{s \in S} = p(n) \prod_{s \in S} \frac{Cs}{2\sqrt{n}}.
$$
\end{theorem}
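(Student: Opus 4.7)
I will reduce $p_{-S}(n)$ to an alternating sum of $p$-values by inclusion--exclusion, substitute the strong Hardy--Ramanujan expansion of Theorem~\ref{eq:BestAsympt}, and evaluate the resulting iterated finite difference as an integral of a $t$-th derivative.

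\textbf{Steps 1--2 (reduction to an alternating $F$-sum).} From the generating function identity $\sum_{n\geq 0} p_{-S}(n)\,q^n = \prod_{s \in S}(1-q^s)\sum_{n\geq 0} p(n)\,q^n$, inclusion--exclusion yields
$$p_{-S}(n) \;=\; \sum_{T \subseteq S}(-1)^{|T|}\,p\bigl(n - \sigma(T)\bigr),\qquad \sigma(T) := \sum_{s \in T} s,$$
a sum of only $2^t$ terms (with the convention $p(m)=0$ for $m<0$). Setting
$$F(m) \;:=\; \frac{e^{C\lambda_m}}{4\pi\sqrt{2}\,\lambda_m^{2}}\Bigl(C - \tfrac{1}{\lambda_m}\Bigr),$$
Theorem~\ref{eq:BestAsympt} gives $p(m) = F(m) + O(e^{D\sqrt m})$, so substituting into the previous display yields
$$p_{-S}(n) \;=\; \sum_{T \subseteq S}(-1)^{|T|}\,F\bigl(n-\sigma(T)\bigr) \;+\; O\bigl(e^{D\sqrt n}\bigr).$$
Since $D<C$, the remainder is exponentially smaller than our target of order $e^{C\sqrt n}/\mathrm{poly}(n)$ and is therefore negligible.

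\textbf{Step 3 (iterated difference as an integral).} Since $F$ extends to a smooth function of a real variable, iterating the elementary identity $G(n) - G(n-s) = \int_{0}^{s} G'(n - u)\,du$ once for each $s \in S$ gives the representation
$$\sum_{T \subseteq S}(-1)^{|T|}F\bigl(n - \sigma(T)\bigr) \;=\; \int_{0}^{s_1}\!\!\cdots\int_{0}^{s_t} F^{(t)}\bigl(n - u_1 - \cdots - u_t\bigr)\,du_1\cdots du_t.$$
Writing $F(m) = f(\lambda_m)\,e^{C\lambda_m}$ with $f(x) = (C - 1/x)/(4\pi\sqrt{2}\,x^2)$ and using $\lambda_m' = 1/(2\lambda_m)$, a short induction on $t$ shows that each differentiation contributes a dominant factor $C/(2\lambda_m)$ from the exponential together with strictly smaller corrections coming from $f'$, so
$$F^{(t)}(m) \;=\; \Bigl(\frac{C}{2\lambda_m}\Bigr)^{t} F(m)\bigl(1 + O(m^{-1/2})\bigr).$$
Because $F^{(t)}(n-u)/F^{(t)}(n) \to 1$ uniformly for $u \in [0,\sigma(S)]$, the integral is $\prod_{s\in S} s \cdot F^{(t)}(n)(1+o(1))$. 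Using $F(n) \sim p(n)$ and $\lambda_n \sim \sqrt n$ then delivers the claimed
$$p_{-S}(n) \;\sim\; p(n) \prod_{s \in S} \frac{Cs}{2\sqrt n}.$$

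\textbf{Main obstacle.} The technical heart of the argument is Step 3: each $F(n - \sigma(T))$ is exponentially large in $\sqrt n$, yet their alternating combination must shrink by the polynomial factor $n^{-t/2}$. The integral formula makes this cancellation automatic, but one still has to verify that $t$-fold differentiation of $F$ loses exactly the expected $t$-th power of $1/\sqrt n$ and does not produce comparable subleading terms; this is where the factorization $F = f(\lambda_m)\,e^{C\lambda_m}$ and the slow polynomial decay of $f$ relative to the exponential are essential.
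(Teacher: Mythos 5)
Your proof is correct and takes a genuinely different route from the paper's. Both approaches start from the same inclusion--exclusion identity $p_{-S}(n)=\sum_{T\subseteq S}(-1)^{|T|}p(n-\sigma(T))$, but the paper then expands each summand $q(n-\sigma(T))$ as a polynomial in $n^{-1/2}$ whose coefficients $g(z,\sigma(T))$ are themselves polynomials in $\sigma(T)$ of degree $z$ (Lemma~\ref{thm:1step}), and invokes a combinatorial vanishing identity (Lemma~\ref{thm:algebra}) to kill all terms with $z<t$ and isolate the $z=t$ coefficient. You instead recognize the alternating sum as an iterated finite difference of $F$ and use its integral representation $\int_0^{s_1}\!\cdots\!\int_0^{s_t}F^{(t)}(n-u_1-\cdots-u_t)\,du$, which makes the cancellation automatic; the remaining work is a direct estimate of $F^{(t)}$, following from the factorization $F=f(\lambda_m)e^{C\lambda_m}$ and the observation that differentiating the exponential contributes a factor $C/(2\lambda_m)$ while differentiating the rational prefactor only loses extra powers of $\lambda_m^{-1}$. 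Your route dispenses with both the term-by-term expansion and the algebraic lemma and is arguably more robust, at the modest cost of having to justify the derivative estimate rigorously. To tighten that point, write $F^{(k)}(m)=g_k(\lambda_m)e^{C\lambda_m}$ with $g_k$ rational and the recursion $g_{k+1}(x)=\tfrac{1}{2x}\bigl(Cg_k(x)+g_k'(x)\bigr)$; an easy induction gives $g_k(x)=a_kx^{-2-k}\bigl(1+O(x^{-1})\bigr)$ with $a_{k+1}=\tfrac{C}{2}a_k$, since $g_k'$ is of strictly lower order $x^{-3-k}$ and cannot interfere with the leading term. This yields $F^{(t)}(m)\sim\bigl(C/2\lambda_m\bigr)^tF(m)$ uniformly, and the rest of your argument then goes through.
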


We base our proof on manipulating the strong asymptotics of $p(n)$. First we estimate $p(n-s)$ for a fixed $s \in \mathbb{N}$, and then express $p_{-S}(n)$ as a sum of values $p(n-s)$ for various numbers $s$.

\section{Auxiliary results}

Let $S \subset \mathbb{N}$ be a finite set with $|S| = t$ and let $s \in [t]$. First we determine the asymptotics of $p(n-s)$ and then we prove an algebraic identity needed later in the proof of the main result.

We set
$$
    q(n) := \frac{e^{C \lambda_n}}{\lambda_n^2} \left( C - \frac{1}{\lambda_n} \right).
$$

\begin{lemma} \label{thm:1step}
Let $t\in\mathbb{N}$. Then for all $n,s \in \mathbb{N}$ with $n>s$ we have
\begin{align} \label{eq:mainterm}
     p(n-s) = \frac{e^{C \sqrt{n}}}{4 \pi n \sqrt{2}} \sum_{z=0}^{t} g(z, s) n^{-z/2} + O \left( e^{C\sqrt{n}} n^{-\frac{t+3}{2}}\right)
\end{align}
where for $z \in \{ 0, 1, \dots, t\}$ we denote by $g(z, s)$ a real polynomial in $s$ with degree $z$ and leading term
\begin{align} \label{eq:lead}
    g(z, s) = \frac{(-1)^z C^{z+1}}{2^z z!} s^{z} + h(z,s)
\end{align}
where $h(z,s)$ is a real polynomial in $s$ with degree at most $z-1$.
\end{lemma}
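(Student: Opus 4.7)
The plan is to apply the strong Hardy--Ramanujan asymptotics (Theorem~\ref{eq:BestAsympt}) to $p(n-s)$ and then expand everything in powers of $n^{-1/2}$, treating $s$ as a fixed integer parameter. With $u:=s+1/24$ so that $\lambda_{n-s}^{2}=n-u$, Theorem~\ref{eq:BestAsympt} gives
$$
p(n-s)=\frac{q(n-s)}{4\pi\sqrt{2}}+O\!\left(e^{D\sqrt{n-s}}\right),
$$
and since $D<C$ the bound $e^{D\sqrt{n-s}}\le e^{D\sqrt{n}}=e^{C\sqrt{n}}\cdot e^{-(C-D)\sqrt{n}}$ shows that this remainder is absorbed into the target $O(e^{C\sqrt{n}}\,n^{-(t+3)/2})$. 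The problem thus reduces to expanding $q(n-s)$.

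I would split $q(n-s)=A(n,u)\,e^{C\sqrt{n-u}}$ with
$$
A(n,u):=\frac{C}{n-u}-\frac{1}{(n-u)^{3/2}}.
$$
The geometric series for $(n-u)^{-1}$ and the binomial series for $(n-u)^{-3/2}$ give $A(n,u)=\sum_{m\ge 2}a_m(s)\,n^{-m/2}$, where $a_m(s)$ is a polynomial in $s$ of degree $\lfloor m/2\rfloor-1$; crucially, $a_2=C$. For the exponential factor, set $B(n,u):=C\sqrt{n-u}-C\sqrt{n}=-Cu/(2\sqrt{n})+O(n^{-3/2})$ so that $e^{C\sqrt{n-u}}=e^{C\sqrt{n}}\,e^{B(n,u)}$. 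Since $|B(n,u)|=O(n^{-1/2})$, Taylor's theorem with Lagrange remainder yields
$$
e^{B(n,u)}=\sum_{j=0}^{t+1}\frac{B(n,u)^{j}}{j!}+O\!\left(n^{-(t+2)/2}\right),
$$
and reorganising in powers of $n^{-1/2}$ produces $e^{B(n,u)}=\sum_{m\ge 0}\gamma_m(s)\,n^{-m/2}$ up to the stated error, where $\gamma_m(s)$ is a polynomial in $s$ of degree exactly $m$ with leading coefficient $(-C/2)^{m}/m!$ (in the $j=m$ summand every $B$-factor must contribute its linear term $-Cu/(2\sqrt{n})$).

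Multiplying the two expansions, the coefficient of $n^{-M/2}$ in $q(n-s)/e^{C\sqrt{n}}$ is
$$
P_{M}(s)=\sum_{k=0}^{M-2}a_{M-k}(s)\,\gamma_k(s).
$$
The $s$-degree of the $k$-th summand is at most $\lfloor(M-k)/2\rfloor-1+k$, a quantity that is non-decreasing in $k$ and strictly larger at $k=M-2$ (value $M-2$) than at $k=M-3$ (value $M-3$). Hence the maximum $M-2$ is attained uniquely at $k=M-2$, with leading contribution
$$
a_{2}\,\gamma_{M-2}(s)=C\cdot\frac{(-C/2)^{M-2}}{(M-2)!}\,s^{M-2}.
$$
Setting $z:=M-2$ and $g(z,s):=P_{z+2}(s)$ yields the expansion \eqref{eq:mainterm} with leading term $(-1)^{z}C^{z+1}/(2^{z}z!)\cdot s^{z}$, matching \eqref{eq:lead}. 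The truncation at $M=t+2$ leaves two sources of error: the tail of $e^{B(n,u)}$, of size $O(n^{-(t+2)/2})$, multiplied by $A(n,u)=O(n^{-1})$; and the tail of $A(n,u)$ beyond $a_{t+2}$, of size $O(n^{-(t+3)/2})$, multiplied by $e^{B(n,u)}=O(1)$. Both give $O(n^{-(t+3)/2})$, yielding the displayed error after reinstating $e^{C\sqrt{n}}$.

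The main obstacle is purely bookkeeping: one must carefully match degrees in $s$ against orders of $n^{-1/2}$ across the product, and verify that the highest power of $s$ in $g(z,s)$ comes \emph{solely} from the pair $(a_2,\gamma_z)$. No analytic difficulty remains once the strong Hardy--Ramanujan asymptotic is available.
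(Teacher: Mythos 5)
Your proof is correct and follows essentially the same strategy as the paper: write $q(n-s)$ as (prefactor) $\times$ (exponential), expand each as a series in $n^{-1/2}$ with polynomial coefficients in $s$, multiply, and show the top $s$-degree in the coefficient of $n^{-(z+2)/2}$ comes uniquely from the constant term of the prefactor paired with the leading term of the exponential expansion. The only cosmetic difference is that you absorb the $-1/24$ shift and the $-s$ shift into one variable $u=s+1/24$ and expand directly, whereas the paper first expands $q(n)$ around $\sqrt n$ (getting coefficients $a_k$) and then separately reexpands each $(n-s)^{-k/2-1}$ and $e^{C\sqrt{n-s}}$; both organizations lead to the same degree bookkeeping, with your $a_2=C$ playing the role of the paper's $a_0=C$.
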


\begin{proof}
Let $t\in\mathbb{N}$.
We expand the main term in Theorem \ref{eq:BestAsympt} in powers of $n$. Since $e^x=\sum_{i\ge0}\frac{x^i}{i!}=\sum_{i=0}^l\frac{x^i}{i!}+O(x^{l+1})$ for $|x|<c<1$ and 
($c_i$ are some real constants, not necessarily always the same)
\begin{eqnarray*}
e^{C\lambda_n}&=&e^{C\sqrt{n}(1-1/24n)^{1/2}}=e^{C\sqrt{n}+c_1n^{-1/2}+c_2n^{-2/2}+\dots+c_l n^{-l/2}+O(n^{-(l+1)/2})}\\
\frac{1}{\lambda_n^2}&=&\frac{1}{n(1-1/24n)}=c_1n^{-1}+\dots+c_l n^{-l}+O(n^{-l-1})\\
C-\frac{1}{\lambda_n}&=&C-\frac{1}{\sqrt{n}(1-1/24n)^{1/2}}\\
&=&C+c_1n^{-1/2}+c_2n^{-2/2}+\dots+c_l n^{-l/2}+O(n^{-(l+1)/2})
\end{eqnarray*}
we get, for every $n>n_0(t)$ and some coefficients $a_k$,
\begin{align} \label{eq:star}
    q(n) = \frac{e^{C \lambda_n}}{\lambda_n^2} \left( C - \frac{1}{\lambda_n} \right) = e^{C\sqrt{n}} \left( \sum_{k=0}^{t} a_k n^{-k/2-1} + O(n^{-\frac{t+3}{2}}) \right)
\end{align}
(note that $a_0=C$). For all integers $n>s>0$ we have, expanding again $(n-s)^{-k/2-1}=n^{-k/2-1}(1-s/n)^{-k/2-1}=\dots$ by I. Newton's binomial theorem, that
\begin{align} 
    &\sum_{k=0}^{t} a_k (n-s)^{-k/2-1}\nonumber\\ 
    &= \sum_{k=0}^{t} a_k \left( \sum_{l=0}^{t} \binom{-1-k/2}{l} (-s)^l n^{-1-k/2-l}+O(n^{-k/2-t-2}) \right) \nonumber \\ 
    &= \sum_{w=0}^{2t} f(w, s) n^{-w/2-1} + O(n^{-t-2})  \label{eq:prvni}
\end{align}
where 
\begin{align} \label{eq:leading}
    f(w ,s) = \sum_{k,l\ge0,\,k+2l=w} (-1)^l a_k \binom{-1-k/2}{l} s^l
\end{align}
is a real polynomial in $s$ with $\deg_s f(w, s) = \lfloor w/2 \rfloor$. 

Next we combine expansions of the square root and of the exponential function, as above, and 
denote $x=n^{-1/2}$. For all integers $n>s>0$, where $s$ is fixed, we get 
\begin{eqnarray}  
    e^{C\sqrt{n-s}}&=&e^{C\sqrt{n}(1-s/n)^{1/2}}\nonumber\\
    &=&e^{C\sqrt{n}(1+\binom{1/2}{1}(-s/n)^1+\dots+\binom{1/2}{t}(-s/n)^t+O((s/n)^{t+1})}\nonumber\\
    &=&e^{C\sqrt{n}}e^{(-C/2)s x+c_2s^2x^3+c_3s^3x^5+\dots+c_ts^t x^{2t-1}+O(s^{t+1}x^{2t+1})}\nonumber\\
    &=& e^{C \sqrt{n}}\left(\sum_{i=0}^{t} d(i, s) n^{-i/2} +O(n^{-\frac{t+1}{2}})\right)\label{eq:exponential}
\end{eqnarray}
where $d(i, s)$ is a polynomial in $s$ with $\deg_s d(i, s) = i$ and 
\begin{align} \label{eq:twostars}
    d(i, s) = \frac{(-1)^i C^i}{2^i i!} s^{i} + c_{i-1}s^{i-1}+\dots
\end{align}
($c_{i-1}s^{i-1}+\dots$ are the remaining terms with degree less than $i$).

Combining results (\ref{eq:star}), (\ref{eq:prvni}) and (\ref{eq:exponential}) we have
\begin{align*}
   q(n-s) 
   =& e^{C \sqrt{n}}\left( \sum_{i=0}^{t} d(i, s) n^{-i/2} + O({n^{-\frac{t+1}{2}}}) \right) \left( \sum_{w=0}^{2t} f(w, s) n^{-\frac{w+2}{2}} + O({n^{-t-2}}) \right) \\
   =& \frac{e^{C \sqrt{n}}}{n} \sum_{z=0}^{t} g(z, s) n^{-z/2} + O \left( e^{C\sqrt{n}} n^{-\frac{t+3}{2}} \right),
\end{align*}
where 
$$
    g(z, s) = \sum_{i,w\ge0,\,i+w=z} d(i, s) f(w, s)
$$
is a real polynomial in $s$ with $\deg_s g(z, s) = z$. Indeed, $\max_{i,w\ge0,\,i+w=z}(i+\lfloor w/2\rfloor)=z$, attained uniquely for $i=z$ and $w=0$. Moreover, (\ref{eq:leading}) and (\ref{eq:twostars}) imply
\begin{align*}
    g(z, s) = \frac{(-1)^z C^{z+1}}{2^z z!} s^{z} + h(z, s)
\end{align*}
where $h(z, s)$ are the remaining terms with $\deg_s h(z, s) \leq z-1$. The exponent of $C$ increased by one since $a_0 = C$. Therefore for all integers $n>s>0$, 
\begin{align*}
   p(n-s) &= \frac{q(n-s)}{4\pi\sqrt{2}}+O\left(e^{D\sqrt{n}}\right)\\
   &= \frac{e^{C \sqrt{n}}}{4 \pi n \sqrt{2}} \sum_{z=0}^{t} g(z, s) n^{-z/2} + O \left( e^{C\sqrt{n}} n^{-\frac{t+3}{2}} \right).
\end{align*}
\end{proof}

We state and prove an identity needed in the main proof. Recall that for $t\in\mathbb{N}_0$, $[t]=\{1,2,\dots,t\}$.

\begin{lemma} \label{thm:algebra}
Let $t \geq z\ge0$ be integers and $s_1, s_2, \dots, s_t$ be variables. Then
$$
    \sum_{J \subset [t]} (-1)^{|J|} \left( \sum_{i \in J} s_i \right)^z = 
    \left\{ \begin{array}{ll}
      0                               & \mbox{for } z < t \\
	  (-1)^t t! \prod_{i=1}^t s_i     & \mbox{for } z = t.
	\end{array} \right. 
$$
The first case holds in fact more generally for any polynomial in $\sum_{i \in J} s_i$ with degree at most $t-1$.
\end{lemma}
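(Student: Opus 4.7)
My plan is to compute the coefficient of each monomial in $s_1,\dots,s_t$ on the left-hand side and identify which survive. First I would expand $\bigl(\sum_{i\in J} s_i\bigr)^z$ by the multinomial theorem: the monomial $s_1^{a_1}\cdots s_t^{a_t}$ (with $\sum_i a_i=z$) appears in $\bigl(\sum_{i\in J}s_i\bigr)^z$ with coefficient $\binom{z}{a_1,\dots,a_t}$ exactly when $J$ contains the support $A:=\{i:a_i>0\}$, and with coefficient $0$ otherwise.

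Swapping the order of summation, the coefficient of $s_1^{a_1}\cdots s_t^{a_t}$ on the left-hand side becomes
$$\binom{z}{a_1,\dots,a_t}\sum_{J\supset A}(-1)^{|J|}.$$
The central computation is this inner subset sum: parametrising $J=A\cup J'$ with $J'\subset[t]\setminus A$, one gets $(-1)^{|A|}\sum_{J'\subset[t]\setminus A}(-1)^{|J'|}$, and by the standard identity $\sum_{k}(-1)^k\binom{m}{k}=0$ for $m\geq 1$ this vanishes unless $[t]\setminus A=\emptyset$, that is, unless every $a_i$ is positive.

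For $z<t$ there is no multi-index $(a_1,\dots,a_t)$ with $\sum_i a_i=z$ and all $a_i\geq 1$, so the entire sum collapses to $0$. For $z=t$ the only surviving multi-index is $(1,1,\dots,1)$, contributing multinomial coefficient $t!$ and sign $(-1)^t$, which gives $(-1)^t t!\prod_{i=1}^t s_i$. The supplementary statement, that the sum vanishes for any polynomial in $\sum_{i\in J}s_i$ of degree at most $t-1$, is then immediate by linearity from the $z<t$ cases applied to each monomial $x^z$ with $0\leq z\leq t-1$. I do not anticipate any real obstacle; the whole argument is a clean application of inclusion-exclusion once the multinomial expansion has been written out.
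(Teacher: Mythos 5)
Your proposal is correct and follows essentially the same route as the paper: both arguments extract the coefficient of a fixed monomial via the multinomial theorem, observe that only sets $J$ containing the monomial's support contribute, and then apply the alternating binomial identity $\sum_k(-1)^k\binom{m}{k}=0$ for $m\geq 1$ to kill all monomials whose support is a proper subset of $[t]$. The only difference is cosmetic (you parametrize by a multi-index $(a_1,\dots,a_t)$ with support $A$, while the paper writes $s_{j_1}^{\alpha_1}\cdots s_{j_k}^{\alpha_k}$); the underlying computation is identical.
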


\begin{proof}
Let $k \leq t$ be positive integers, $j_i$ with $1\le j_1<j_2<\dots<j_k\le t$ be some indices and $\alpha_1, \dots, \alpha_k$ be positive integers with $\sum \alpha_i = z$, so $k\le z$. We denote the polynomial on the left side as $f = f(s_1, \dots, s_t)$ and examine the coefficient  $[s_{j_1}^{\alpha_1} \cdots s_{j_k}^{\alpha_k}]f$. Clearly, only the sets $J$ containing $\{j_1,\dots,j_k\}$ contribute to it. Each such $J$ contributes $(-1)^{|J|} \binom{z}{\alpha_1, \dots, \alpha_k}$ to the coefficient. Summing over all $J$ containing $\{j_1,\dots,j_k\}$ we obtain
$$
    [s_{j_1}^{\alpha_1} \cdots s_{j_k}^{\alpha_k}]f = (-1)^k \binom{z}{\alpha_1, \dots, \alpha_k} \sum_{l=0}^{t-k} (-1)^l \binom{t-k}{l}
$$
(here $l=|J\backslash\{j_1,\dots,j_k\}|$). If $z<t$ then $k<t$ and the sum is $(1-1)^{t-k}=0$ by the binomial theorem. If $z=t$ then only $k=z=t$ yields non-zero contribution, for $\alpha_1 = \dots = \alpha_k = 1$, and we get the coefficient
$$
     [s_1 \cdots s_t]f = (-1)^t \binom{z}{1, \dots, 1} = (-1)^tz!=
     (-1)^t t!\;,
$$
which proves the theorem.
\end{proof}

\section{Proof of the Theorem \ref{thm:main}}

\begin{proof}[Proof of Theorem \ref{thm:main}]
Let $J\subset[t]$. Note that the partitions of $n$ using each part $s_j$, $j\in J$, at least once are in bijection with the partitions of $n-\sum_{j \in J} s_j$. Thus by the principle of inclusion and exclusion, 
$$
    p_{-S}(n) = \sum_{J \subset [t]} (-1)^{|J|} p\bigg(n - \sum_{j \in J} s_j\bigg)\;.
$$
By Lemma \ref{thm:1step} we have
\begin{align*}
    p_{-S}(n)
    =& \sum_{J \subset [t]} (-1)^{|J|} \left( \frac{e^{C \sqrt{n}}}{4 \pi n \sqrt{2}} \sum_{z=0}^{t} g\bigg(z, \sum_{j \in J} s_j\bigg) n^{-z/2} + O \left( e^{C\sqrt{n}} n^{-\frac{t+3}{2}} \right) \right) \\
    =& \frac{e^{C \sqrt{n}}}{4 \pi n \sqrt{2}} \sum_{z=0}^{t} \sum_{J \subset [t]} (-1)^{|J|} g\bigg(z, \sum_{j \in J} s_j\bigg) n^{-z/2} + O \left( e^{C\sqrt{n}} n^{-\frac{t+3}{2}} \right).
\end{align*}
We apply Lemma \ref{thm:algebra} to the polynomial $g(z, \sum_{j \in J} s_j)$ when $z \in [t-1]$---first we understand $s_j$ as variables and only at the end we substitute for them the numbers $s_j$---and obtain, by the first case,
$$
    \sum_{J \subset [t]} (-1)^{|J|} g(z, \sum_{j \in J} s_j) = 0\;. 
$$
Hence only term $z=t$ remains yielding
\begin{align*}
    p_{-S}(n) = \frac{e^{C \sqrt{n}}}{4 \pi n \sqrt{2}} \sum_{J \subset [t]} (-1)^{|J|} g\bigg(t, \sum_{j \in J} s_j\bigg) n^{-t/2} + O \left( e^{C\sqrt{n}} n^{-\frac{t+3}{2}} \right).
\end{align*}
Finally, we expand $g(t, \sum s_j)$ by equation (\ref{eq:lead}) and use Lemma \ref{thm:algebra}. By the first case of Lemma \ref{thm:algebra} the contributions of $h(z, \sum s_j)$ sum up to zero. By the second case  the contribution of the leading term in $g(z, \sum s_j)$ is
\begin{align*}
    \sum_{J \subset [t]} (-1)^{|J|} \frac{(-1)^t C^{t+1}}{2^t t!} \bigg(\sum_{j \in J} s_j\bigg)^{t} 
    = \frac{(-1)^t C^{t+1}}{2^t t!} (-1)^t t! \prod_{i=1}^t s_i 
    = \frac{C^{t+1}}{2^t} \prod_{i=1}^t s_i\;.
\end{align*}  
Thus
\begin{align*}
    p_{-S}(n) 
    =& \frac{e^{C \sqrt{n}}}{4 \pi n \sqrt{2}} \left[ \left(\frac{C^{t+1}}{2^t} \prod_{j=1}^t s_j \right)n^{-t/2} \right] + O \left( e^{C\sqrt{n}} n^{-\frac{t+3}{2}} \right),
\end{align*}
which in view that $C=\pi\sqrt{2/3}$ and $p(n)\sim e^{C\sqrt{n}}/4n\sqrt{3}$ gives the desired asymptotics
$$
     p_{-S}(n) \sim \frac{C e^{C \sqrt{n}}}{4 \pi n \sqrt{2}} \left(\frac{C}{2} \right)^t n^{-t/2} \prod_{j=1}^t s_j = p(n) \left( \frac{C}{2\sqrt{n}} \right)^t \prod_{j=1}^t s_j\;.
$$
\end{proof}

\section{Partition functions of ideals}

Let $X$ be a set of partitions and $\lambda, \gamma \in X$. We say that $\lambda$ is a \emph{subpartition} of $\gamma$ if no part from $\lambda$ has more occurrences in $\lambda$ then in $\gamma$. We denote this relation $\lambda < \gamma$. A set of partitions $X$ is a \emph{partition ideal}, if $\lambda < \gamma$ and $\gamma \in X$ always implies $\lambda \in X$. By $p(n, X)$ we denote the number of partitions of $n$ lying in $X$. Let $Z$ be a set of partitions such that no two partitions in $Z$ are comparable by $<$. We denote by $F_Z$ the set of all partitions that do not contain any element of $Z$. Clearly, $F_Z$ is a partition ideal. We call $Z$ a {\em basis} of the ideal $F_Z$. We denote its counting function by
$$
    p_{-Z}(n) = p(n, F_Z)\;.
$$
Recall that the notation $p_{-S}(n)$ and $p_{-Z}(n)$ where
$S$ is a set of positive integers and $Z$ is a set of partitions. Finally, two partitions are \emph{independent} if their parts are pairwise distinct.

We make use of the Cohen--Remmel theorem \cite{Cohen81, Remmel82} that gives sufficient condition for equality of counting functions of two partition ideals in terms of their bases. In the theorem we use the following notation. For a partition $\lambda$, $|\lambda|$ is the sum of all parts of $\lambda$, and for several partitions $\lambda^i$ their union is the partition $\lambda$ such that the multiplicity of any part equals to the maximum multiplicity attained over all $\lambda^i$.

\begin{theorem}[Cohen, 1981; Remmel, 1982]\label{cohe_remm}
Let $\Lambda = \{ \lambda^1, \lambda^2, \dots\}$ and $\Gamma = \{ \gamma^1 ,\gamma^2 , \dots \}$ be two finite or infinite sequences of partitions (of the same length), such that for every finite set $I \subset\mathbb{N}$, 
$$
\left|\bigcup_{i \in I} \lambda^i\right| = \left|\bigcup_{i \in I} \gamma^i\right|.
$$
Then $p_{-\Lambda}(n) = p_{-\Gamma}(n)$ for every positive integer $n$.
\end{theorem}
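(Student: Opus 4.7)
The plan is a direct inclusion--exclusion on the ``bad events'' $A_i = \{\pi \vdash n : \lambda^i < \pi\}$. First I would establish a basic bijection: for any fixed partition $\mu$, the number of partitions $\pi$ of $n$ with $\mu < \pi$ equals $p(n - |\mu|)$ (with the conventions $p(0)=1$ and $p(k) = 0$ for $k < 0$). Writing $m_j(\sigma)$ for the multiplicity of the part $j$ in a partition $\sigma$, the map just subtracts $\mu$ from $\pi$ multiplicity-wise, sending $\pi$ to the partition with multiplicities $m_j(\pi) - m_j(\mu) \ge 0$; the inverse adds $\mu$ back.

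Next I would record the key compatibility between the subpartition relation and the union: for any finite family $\{\lambda^i : i \in I\}$ with union $\mu_I := \bigcup_{i \in I} \lambda^i$, and any partition $\pi$,
$$
    \mu_I < \pi \iff \lambda^i < \pi \text{ for every } i \in I,
$$
which follows immediately from the definition of the union as pointwise maxima of multiplicities. Hence $\bigcap_{i\in I} A_i = \{\pi \vdash n : \mu_I < \pi\}$, and its cardinality is $p(n - |\mu_I|)$ by the previous paragraph.

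Combining these via the standard inclusion--exclusion sieve gives
$$
    p_{-\Lambda}(n) = \sum_{I} (-1)^{|I|}\, p\bigl(n - |\mu_I|\bigr),
$$
where $I$ ranges over finite subsets of the index set of $\Lambda$; the identical formula holds for $\Gamma$ with $\mu_I$ replaced by $\bigcup_{i \in I} \gamma^i$. The hypothesis that $|\mu_I| = |\bigcup_{i \in I} \gamma^i|$ for every finite $I$ then makes the two inclusion--exclusion sums equal term by term, yielding $p_{-\Lambda}(n) = p_{-\Gamma}(n)$.

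The only point that I expect to demand any care is the possibly infinite index set: one observes that $|\mu_I| \ge |\lambda^j|$ for any $j \in I$, so the nonzero contributions to the sum come only from subsets of $\{i : |\lambda^i| \le n\}$, which (implicitly in the setup) is a finite set, so no convergence issue arises and inclusion--exclusion applies verbatim. Everything else is automatic from the definitions.
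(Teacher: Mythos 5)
The paper states this theorem as a known result, citing Cohen and Remmel, and gives no proof of its own, so there is no in-paper argument to compare against. Your inclusion--exclusion proof is correct and is in fact essentially Cohen's original argument (the title ``Pie-sums'' refers precisely to this sieve); Remmel's proof of the same statement is bijective. All the steps check out: the subtraction bijection giving $|\{\pi \vdash n : \mu < \pi\}| = p(n - |\mu|)$, the equivalence $\bigcup_{i\in I}\lambda^i < \pi$ iff $\lambda^i < \pi$ for all $i\in I$, and the resulting term-by-term matching of the two sieves under the hypothesis $|\bigcup_{i\in I}\lambda^i| = |\bigcup_{i\in I}\gamma^i|$. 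The one point worth tightening is your finiteness remark: $\{i : |\lambda^i|\le n\}$ is finite as soon as the $\lambda^i$ are pairwise distinct nonempty partitions, since there are only finitely many partitions of weight at most $n$; in the intended application $\Lambda$ is an antichain (a basis of an ideal), so this is automatic, and if repetitions were allowed one would first note that discarding them leaves $p_{-\Lambda}$ unchanged.
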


Now we state three applications of our Theorem~\ref{thm:main}. We are inspired by similar results of Han\v cl \cite{HanclDT}, but use a different approach.

\begin{theorem}
Let $F_Z$ be a partition ideal with basis $Z$, where $Z$ contains infinitely many pairwise independent partitions, and let $k$ be any positive integer. Then
$$
    p(n, F_Z) < K e^{C \sqrt{n}} n^{-k}
$$
for any sufficiently large $n$, where $K = K(k, Z)$ is a fixed constant (and $C=\pi\sqrt{2/3}$).
\end{theorem}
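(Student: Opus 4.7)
The plan is to reduce the statement to Theorem~\ref{thm:main} via the Cohen--Remmel theorem. Given $k$, I take $t := 2k$ and aim to extract from the infinite family of pairwise independent partitions in $Z$ a subfamily $\lambda^1,\dots,\lambda^t$ whose weights $s_i := |\lambda^i|$ are pairwise distinct. This extraction is possible: if infinitely many pairwise independent partitions had bounded total weight, then they would all use parts from a finite set $[M]$, contradicting the pairwise distinctness of their parts. Hence the weights in the family are unbounded, and one can successively select $t$ of them with distinct values, which I relabel as $s_1<s_2<\cdots<s_t$.

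Since $\{\lambda^1,\dots,\lambda^t\} \subset Z$, any partition avoiding every element of $Z$ also avoids these $t$ partitions, giving $F_Z \subset F_{\{\lambda^1,\dots,\lambda^t\}}$ and hence $p(n,F_Z) \leq p_{-\{\lambda^1,\dots,\lambda^t\}}(n)$. I then apply Theorem~\ref{cohe_remm} with $\Lambda = (\lambda^1,\dots,\lambda^t)$ and $\Gamma = ((s_1),\dots,(s_t))$, the sequence of single-part partitions. Pairwise independence of the $\lambda^i$ yields $|\bigcup_{i\in I}\lambda^i| = \sum_{i\in I}s_i$ for every $I \subset [t]$, and distinctness of the $s_i$ yields the same value $\sum_{i \in I} s_i$ for $|\bigcup_{i\in I}(s_i)|$. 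The hypothesis of Cohen--Remmel is thus satisfied, and $p_{-\{\lambda^1,\dots,\lambda^t\}}(n) = p_{-S}(n)$ for $S := \{s_1,\dots,s_t\}$.

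Combining Theorem~\ref{thm:main} with the Hardy--Ramanujan asymptotics (\ref{eq:p(n)}) gives
$$p_{-S}(n) \sim \frac{e^{C\sqrt{n}}}{4n\sqrt{3}} \left( \frac{C}{2\sqrt{n}} \right)^t \prod_{i=1}^t s_i = O\!\left( e^{C\sqrt{n}} n^{-k-1} \right),$$
which is dominated by $K e^{C\sqrt{n}} n^{-k}$ for a suitable constant $K=K(k,Z)$ and all sufficiently large $n$, completing the proof. The main obstacle is the first step: the Cohen--Remmel comparison against single-part partitions requires the $s_i$ to be pairwise distinct, so one must carefully use pairwise independence to rule out bounded weights before extracting the subsequence. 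Once the distinct-size subfamily is in hand, the bound follows mechanically from Theorem~\ref{thm:main}.
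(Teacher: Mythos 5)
Your proof is correct and follows essentially the same route as the paper: apply Cohen--Remmel to replace $t$ pairwise independent partitions from $Z$ with single-part partitions $(s_1),\dots,(s_t)$, then invoke Theorem~\ref{thm:main}. The only cosmetic differences are your choice $t=2k$ where the paper takes $t=2k-1$ (both give the needed bound), and that you spell out why one may choose the $\lambda^i$ with pairwise distinct weights --- an unbounded-weight argument the paper implicitly assumes but never states, and which is genuinely needed so that $\bigl|\bigcup_{i\in I}(s_i)\bigr|=\sum_{i\in I}s_i$ in the Cohen--Remmel hypothesis.
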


\begin{proof}
Let $k \in \mathbb{N}$ and $t = 2k-1$. Let $\lambda^1, \lambda^2, \dots, \lambda^t$  be mutually independent partitions from $Z$ such that $|\lambda^1| < |\lambda^2| < \cdots < |\lambda^t|$. By Theorem~\ref{cohe_remm}, applied to $\Lambda = \{\lambda^1, \lambda^2, \dots, \lambda^t\}$ and $\Gamma = \{(|\lambda^1|), (|\lambda^2|), \dots, (|\lambda^t|)\}$,
$$
    p(n, F_Z) \le p_{-\Lambda}(n) = p_{-\Gamma}(n).
$$
From Theorem \ref{thm:main} we have
$$
    p_{-\Gamma}(n) \sim  Ke^{C\sqrt{n}}n^{-1-t/2} < Ke^{C\sqrt{n}}n^{-k}
$$
where $K = K(k, Z)$ is a constant and the last inequality holds for any sufficiently large $n$.
\end{proof}

\begin{hypothesis}
Let $X = F_Z$ be a partition ideal with finite basis $Z$. Then the asymptotics for the counting function $p(n, X)$ is of the form
$$
    p(n, X) \sim K e^{C \sqrt{n}}n^{-1-k},
$$
where $K$ is a constant and $k = m/2$ for some $m \in \mathbb{N}$.
\end{hypothesis}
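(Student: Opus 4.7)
The plan is to mirror the proof of Theorem~\ref{thm:main}. Fix a finite basis $Z = \{\lambda^1,\dots,\lambda^t\}$. A partition $\mu$ of $n$ contains each $\lambda^j$ with $j \in J$ as a subpartition if and only if $\mu$ contains the union $u_J := \bigcup_{j\in J}\lambda^j$ (componentwise maximum of multiplicities); removing $u_J$ yields a bijection with partitions of $n - |u_J|$. Inclusion--exclusion on the events ``$\lambda^j$ is a subpartition of $\mu$'' therefore gives
$$
    p(n, F_Z) = \sum_{J \subset [t]} (-1)^{|J|} p\bigl(n - |u_J|\bigr).
$$

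Fix an integer $T \geq t$ and feed this into Lemma~\ref{thm:1step}, following the calculation in the proof of Theorem~\ref{thm:main}. One obtains
$$
    p(n, F_Z) = \frac{e^{C\sqrt{n}}}{4\pi n\sqrt{2}} \sum_{z=0}^{T} S_z \, n^{-z/2} + O\bigl(e^{C\sqrt{n}} n^{-(T+3)/2}\bigr),
$$
where $S_z := \sum_{J\subset[t]} (-1)^{|J|} g(z, |u_J|)$. If $z^\star$ denotes the smallest index with $S_{z^\star} \neq 0$, then
$$
    p(n, F_Z) \sim \frac{S_{z^\star}}{4\pi\sqrt{2}}\, e^{C\sqrt{n}}\, n^{-1-z^\star/2},
$$
which is the conjectured asymptotic with $m = z^\star$. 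The conjecture thus reduces to two structural questions: (a)~identifying $z^\star$ combinatorially in terms of $Z$, and (b)~proving $z^\star$ is finite, i.e.\ that not every $S_z$ vanishes.

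For (a), one needs a generalization of Lemma~\ref{thm:algebra} adapted to the set function $J \mapsto |u_J|$. When the partitions in $Z$ are pairwise independent, $|u_J| = \sum_{j\in J}|\lambda^j|$ is additive and Lemma~\ref{thm:algebra} directly forces $S_z = 0$ for $z < t$ and $S_t \neq 0$, giving $z^\star = t$, exactly as in Theorem~\ref{thm:main}. In general $J \mapsto |u_J|$ is only submodular, and the cancellation pattern depends on how the parts of the $\lambda^j$ overlap. A natural route is to invoke the Cohen--Remmel theorem (Theorem~\ref{cohe_remm}) to replace $Z$ by an equivalent basis whose union values $|u_J|$ coincide but whose partitions are simpler (for instance, independent), reducing to a setting where Theorem~\ref{thm:main} applies.

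The hard part is (b). If every $S_z$ vanished, $p(n, F_Z)$ would decay faster than $e^{C\sqrt{n}} n^{-M}$ for every $M$, which for finite $Z$ is implausible but is not forced by the upper-bound expansion alone. A matching lower bound $p(n, F_Z) \gg e^{C\sqrt{n}} n^{-M_Z}$ must be supplied independently, for instance by exhibiting an explicit subfamily of $F_Z$ of this size (say, ordinary partitions whose parts avoid every integer used in some fixed $\lambda^j$, a set whose count is controlled by Theorem~\ref{thm:main}). Combining such a lower bound with the upper-bound expansion pins down a finite $z^\star$, yielding a half-integer $k = z^\star/2$ and completing the conjecture.
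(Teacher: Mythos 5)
The statement you are attempting is labelled a \emph{Conjecture} in the paper: the author offers no proof, so there is nothing in the text to compare your argument against. What you have written is an honest sketch of a strategy rather than a proof, and you say so yourself, but the strategy is actually sound and, with the two loose ends below tied off, it would settle the conjecture with exactly the paper's tools.

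Your reduction is correct: the inclusion--exclusion identity $p(n,F_Z)=\sum_{J\subset[t]}(-1)^{|J|}p(n-|u_J|)$ holds because ``$\mu$ contains every $\lambda^j$, $j\in J$'' is equivalent to ``$\mu$ contains $u_J$'', and deleting $u_J$ from $\mu$ is a bijection onto partitions of $n-|u_J|$. Feeding this into Lemma~\ref{thm:1step} with a fixed large truncation $T$ gives the expansion with coefficients $S_z=\sum_{J}(-1)^{|J|}g(z,|u_J|)$, and since the $g(z,\cdot)$ do not depend on $T$ (they come from $T$-independent Taylor coefficients in the proof of Lemma~\ref{thm:1step}), the smallest $z^\star\le T$ with $S_{z^\star}\ne 0$ controls the asymptotics.

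Two points need sharpening. First, your lower-bound construction as phrased --- ``partitions whose parts avoid every integer used in some fixed $\lambda^j$'' --- does not lie in $F_Z$ if it avoids only the parts of a single $\lambda^j$: such a partition can still contain a different $\lambda^{j'}$. You must take $P$ to be the union over \emph{all} $j\in[t]$ of the distinct parts appearing in $\lambda^j$. Then every partition avoiding $P$ is in $F_Z$, Theorem~\ref{thm:main} gives $p_{-P}(n)\gg e^{C\sqrt n}n^{-1-|P|/2}$, and choosing $T\ge|P|$ forces some $S_z$ with $z\le|P|$ to be nonzero (else the upper-bound expansion would be $o(e^{C\sqrt n}n^{-1-|P|/2})$, a contradiction). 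Since $p(n,F_Z)>0$ for large $n$, positivity of the leading coefficient $S_{z^\star}$ is automatic. Second, the conjecture requires $k=m/2$ with $m\in\mathbb{N}$, i.e. $z^\star\ge1$; you do not address this, but it follows because $g(0,s)=a_0=C$ is constant, so $S_0=C\sum_J(-1)^{|J|}=C(1-1)^t=0$ for $t\ge1$. With these repairs your outline is, as far as I can tell, a genuine proof of what the paper leaves as a conjecture, which is stronger than what the paper itself claims. The remaining question you raise under (a) --- identifying $z^\star$ combinatorially --- is interesting but is not needed for the conjecture as stated.
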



Let $\varepsilon \in(0,1)$. For the forthcoming theorem we set
$$
    f(n) = \left( 1 - \frac{\log^{1+\varepsilon} n}{\sqrt{n}} \right)^2.
$$
Thus $f(n)$ goes to $1$ as fast as $n^{-1/2} \log^{1+\varepsilon} n$ goes to $0$.

\begin{theorem}
Let $\varepsilon > 0$ and $f(n)$ be as above. Then there is a partition ideal $X$ such that both
\begin{enumerate}
    \item $p(n, X) = 0$
    \item $p(n, X) > p(nf(n))$
\end{enumerate}
holds for infinitely many positive integers $n$.  
\end{theorem}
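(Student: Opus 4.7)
The plan is to construct the ideal $X$ as $F_Z$ for a judiciously chosen antichain $Z$, using two very sparse, interleaved sequences $n_1 < m_1 < n_2 < m_2 < \cdots$ of positive integers. The $n_k$'s will be the ``zero-sites'' where $p(n_k, X) = 0$ and the $m_k$'s will be ``high-value sites'' where $p(m_k, X) > p(m_k f(m_k))$. To make both conditions coexist, I would take the spacing very large, e.g.\ $n_{k+1} \ge 2^{n_k}$, with $m_k$ chosen in the middle of the gap between $n_k$ and $n_{k+1}$.

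The first step is to design $Z$ so that every partition of $n_k$ contains some element of $Z$ as a subpartition. No single partition can play this role for the whole of $\mathcal{P}(n_k)$, so $Z$ must include a covering family $Z_k$ at each scale $n_k$. A natural choice exploits the combinatorial dichotomy that every $\lambda \vdash n_k$ has either a part of size $\ge M_k$ or a part of multiplicity $\ge m_k'$ whenever $M_k m_k' > n_k$; thus $Z_k$ can be built from singletons $(b)$ with $b \ge M_k$ together with repeated-part partitions $(c^{m_k'})$ with $c < M_k$. Choosing these parameters large (of order $\sqrt{n_k}$) localizes the effect of $Z_k$ to partitions of size around $n_k$ and above, so the family $Z = \bigcup_k Z_k$ can be arranged to be a global antichain by spacing the $n_k$'s sufficiently and discarding redundant members.

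The second step is to estimate $p(m_k, X)$ from below. I would apply the Cohen--Remmel theorem (Theorem~\ref{cohe_remm}) to replace the portion of $Z$ relevant at scale $m_k$ by an equivalent pairwise-independent antichain $\Gamma_k$ of single-part partitions, and then invoke Theorem~\ref{thm:main} to obtain $p(m_k, X) \sim p(m_k) \prod_{s \in S_k} \frac{Cs}{2\sqrt{m_k}}$, where $S_k$ is the resulting forbidden-parts set. Comparing with $p(m_k f(m_k)) \sim p(m_k) \cdot e^{-C \log^{1+\varepsilon} m_k}$, the required inequality reduces to a logarithmic condition on $|S_k|$ and on the sizes of its elements, which is satisfied as long as $(n_k)$ is chosen sparsely enough.

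The main obstacle is the balancing act between coverage and sparsity: the covering $Z_k$ must be strong enough to force $p(n_k, X) = 0$, yet its Cohen--Remmel-equivalent $\Gamma_k$ at scale $m_k$ must contain few forbidden parts of manageable size. A secondary difficulty is that Theorem~\ref{thm:main} is stated for fixed $|S|$, so I would need to revisit the error analysis of Lemma~\ref{thm:1step} to allow $|S|$ to grow slowly with $m_k$. The doubly-exponential spacing of $(n_k)$ is designed precisely to localize the effect of each $Z_k$, so that $\Gamma_k$ at scale $m_k \ll n_{k+1}$ is sparse; carefully verifying that the proposed $Z$ both covers $\mathcal{P}(n_k)$ for every $k$ and remains a global antichain under the subpartition order is the most delicate technical point.
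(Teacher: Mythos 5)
Your approach is genuinely different from the paper's, but as sketched it contains a clear error that undermines the coverage step, and the subsequent bookkeeping issues you yourself flag are real.

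The dichotomy you invoke is false: $M_k m'_k > n_k$ does \emph{not} force every partition of $n_k$ to have a part $\ge M_k$ or a part of multiplicity $\ge m'_k$. For instance, with $n_k = 100$, $M_k = 20$, $m'_k = 10$ we have $M_k m'_k = 200 > 100$, yet $(19,18,17,16,15,14,1)\vdash 100$ has all parts $<20$ and all multiplicities equal to $1$. A correct version needs a much stronger relation (roughly $M_k^2 m'_k \lesssim n_k$), and even then forcing $p(n_k,X)=0$ by forbidden subpartitions requires $Z_k$ to contain singletons $(b)$ for \emph{every} $b\in[M_k,n_k]$ (any part up to $n_k$ could occur), together with $(c^{m'_k})$ for every $c<M_k$. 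That is on the order of $n_k$ forbidden patterns, not a sparse family. You then still face the problems of keeping $Z=\bigcup_k Z_k$ a global antichain, of applying Cohen--Remmel across scales, and of using Theorem~\ref{thm:main} with $|S_k|$ growing like $n_k$, where the implied constant in the error term of Lemma~\ref{thm:1step} depends on $t=|S|$; you correctly identify these hurdles but do not resolve them.

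The paper takes a shorter route that avoids all of this. It never writes down a basis $Z$: it defines $X$ explicitly as the set of partitions whose parts lie in the union of intervals $I_i=[s_i,t_i]$, each part occurring with multiplicity at most $t_i$, where $s_{i+1}=t_i^3+2$ and $t_i$ is taken large enough, in particular at least the threshold $n_0(s_i)$ past which the asymptotic of Theorem~\ref{thm:main} for $p_{-[s_i]}(n)$ holds within a factor $2$. Then $p(s_{i+1}-1,X)=0$ is a one-line counting bound, since the total weight available in $X$ among parts $\le t_i$ is at most $t_i^2(t_i+1)/2 < s_{i+1}-1$. For the lower bound, $p(t_i,X)\ge p_{-[s_i]}(t_i)$, so Theorem~\ref{thm:main} applies with a \emph{fixed} forbidden set $[s_i]$ at the single point $n=t_i$; comparing with $p(t_i f(t_i))$ reduces to the inequality $p(n)^{1-\sqrt{f(n)}}>n^{3s_i/2}$, which the choice of $t_i$ guarantees. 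This sidesteps the covering problem, the antichain bookkeeping, and the growing-$|S|$ issue entirely, while using the same two ingredients you reach for (Theorem~\ref{thm:main} and the Hardy--Ramanujan asymptotics) in a place where they apply cleanly.
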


\begin{proof}
We define the sequences $(s_i)_{i=1}^{\infty}$ and $(t_i)_{i=1}^{\infty}$ of positive integers such that $s_1 = 2$,
$$
    s_{i+1} = t_i^3 + 2
$$
and, given $s_i$, we set 
\begin{align} \label{eq:guessbound}
    t_i = \max \left\{ s_i, \exp \left( \left( \frac{3s_i+10}{2C}\right)^{1/\varepsilon} \right), 2n_0 \right\} \;,
\end{align}
where $n_0=n_0(s_i)$ is such that for any $n \geq n_0$ we have $f(n)\in(\frac{1}{2},1)$ and both 
$$
    2\cdot\frac{e^{C \sqrt{n}}}{4n \sqrt{3}}>p(n) > \frac12 \cdot \frac{e^{C \sqrt{n}}}{4n \sqrt{3}} \quad \text{ and } \quad p_{-[s_i]}(n) > \frac12 p(n) \prod_{s=1}^{s_i} \frac{Cs}{2\sqrt{n}}\;.
$$
Existence of $n_0$ follows from the Hardy--Ramanujan asymptotics (\ref{eq:p(n)}) and Theorem \ref{thm:main}.

Let $I_i = [s_i, t_i] \cap \mathbb{N}$. Let $X$ be the partition ideal consisting of the partitions that use parts from any of the intervals $I_i$ with multiplicities at most $t_i$, and do not use other parts. Our aim is to prove that the first condition is satisfied for $n = s_{i+1}-1$ and the second condition is satisfied for $n = t_{i+1}$.

Any partition of $s_{i+1}-1$ lying in $X$ may use only parts $\le t_i$ but, as the multiplicities are restricted, sum of all the parts $\le t_i$ equals
$$
    \sum_{l=1}^i t_l \sum_{j=s_l}^{t_l}j \leq t_i \sum_{j=1}^{t_i}j = \frac{t_i^2(t_i+1)}{2} < s_{i+1}-1\;.
$$
Hence easily $p(s_{i+1}-1, X) = 0$ for any positive integer $i$.

Let $K = (8 \sqrt{3})^{-1}$. To prove the second property we first show that for a fixed positive integer $i$ and any $n > \max \{t_i, n_0 \}$ we have
\begin{align} \label{ineq:help} 
    p(n)^{1-\sqrt{f(n)}} > n^{3s_i/2}\;.
\end{align}
Indeed, the definition of $f(n)$ implies $\sqrt{n}(1-\sqrt{f(n)}) = \log^{1+\varepsilon} n$ and $0 < f(n) < 1$, which, combined with the definition of $t_i$ and $n_0$, yields for any $n \geq \max \{t_i, n_0\}$ bound
\begin{align*}
    p(n)^{1-\sqrt{f(n)}} n^{-3s_i/2}
    &> K^{1-\sqrt{f(n)}} e^{C\sqrt{n}(1-\sqrt{f(n)})} n^{-1+\sqrt{f(n)}-3s_i/2} \\
    &> K e^{C\log^{1+\varepsilon} n} n^{-1-3s_i/2} \\
    &= K n^{C\log^\varepsilon n -1-3s_i/2}\ge Kn^4 > 1,
\end{align*}
by (\ref{eq:guessbound}) and the bounds $K>\frac{1}{16}$ and $n\ge2$. 
Now Theorem \ref{thm:main} and (\ref{ineq:help}) implies
\begin{align*}
    p(t_i, X) 
     \geq p_{- \left[ s_i \right] }(t_i) 
     > \frac12 p(t_i) \left( \frac{C}{2\sqrt{t_i}}\right)^{s_i} s_i! 
     > \frac12 p(t_i)^{\sqrt{f(t_i)}} \left( \frac{Ct_i}{2}\right)^{s_i} s_i!.
\end{align*}
We apply again asymptotics (\ref{eq:p(n)}) for $p(t_i)$ and $p(t_if(t_i))$ and have
\begin{align*}
    p(t_i)^{\sqrt{f(t_i)}}
     &> \left( \frac{e^{C\sqrt{t_i}}}{8\sqrt{3}t_i} \right)^{\sqrt{f(t_i)}}
     > \left( \frac{1}{8\sqrt{3}t_i} \right)^{\sqrt{f(t_i)}} 2t_if(t_i)\sqrt{3} p(t_if(t_i)) \\
     &> \frac18 (8t_i\sqrt{3})^{1-\sqrt{f(t_i)}} p(t_if(t_i))
     > \frac18 p(t_if(t_i))\;.
\end{align*}
Putting these results together we get that
$$
    p(t_i, X) > \frac1{16} p(t_if(t_i)) \left( \frac{Ct_i}{2}\right)^{s_i} s_i! > p(t_if(t_i)) 
$$
since $s_i\ge2,Ct_i\ge6$. That completes the proof.
\end{proof}

\section{Acknowledgments}
I would like to thank my supervisor Martin Klazar for useful conversations and corrections. The work was supported by the grant SVV–2017–260452 and SVV-2017-260456.

\bibliographystyle{plain}

\end{document}